\documentclass[authoryear,1p]{elsarticle}
\usepackage{listings}
\usepackage{color}
\usepackage[]{multicol}
\usepackage{amsthm}
\usepackage{multirow}
\usepackage{caption}
\usepackage{here}
\usepackage{bm}
\usepackage{mathrsfs}
\usepackage{subcaption}
\usepackage{amsmath, amssymb}
\usepackage{ascmac}
\usepackage{url}
\usepackage{comment}
\usepackage{nccmath}
\usepackage {booktabs}
\usepackage{natbib}

\newtheorem{theorem}{Theorem}

\newtheorem{lemma}{Lemma}
\newtheorem{algorithm}{Algorithm}

\newtheorem{remark}{Remark}
\def\vec#1{\mbox{\boldmath{$#1$}}}

\usepackage{algorithm}
\usepackage{algorithmic}
\usepackage{tcolorbox}

\journal{}
\begin{document}
%
%
\def\tr{\mathrm{tr}}              
\def\etr{\mathrm{etr}}            
\def\Var{\mathrm{Var}}            
\def\Cov{\mathrm{Cov}}            
\def\E{\mathrm{E}}                
\def\zonal{\boldsymbol{Z}}        
\def\Vec{\mathrm{Vec}}            
\def\Pr{\mathrm{Pr}}              
\def\bnull{\boldsymbol{0}}        
\def\bones{\boldsymbol{1}}        
\def\bpar{\boldsymbol{\partial}}  
\def\diag{\mathrm{diag}}          
%
%
\def\bfa{\boldsymbol{a}}
\def\bfb{\boldsymbol{b}}
\def\bfc{\boldsymbol{c}}
\def\bfd{\boldsymbol{d}}
\def\bfe{\boldsymbol{e}}
\def\bff{\boldsymbol{f}}
\def\bfg{\boldsymbol{g}}
\def\bfh{\boldsymbol{h}}
\def\bfi{\boldsymbol{i}}
\def\bfj{\boldsymbol{j}}          
\def\bfk{\boldsymbol{k}}
\def\bfl{\boldsymbol{l}}
\def\bfm{\boldsymbol{m}}
\def\bfn{\boldsymbol{n}}
\def\bfo{\boldsymbol{o}}
\def\bfp{\boldsymbol{p}}
\def\bfq{\boldsymbol{q}}
\def\bfr{\boldsymbol{r}}
\def\bfs{\boldsymbol{s}}
\def\bft{\boldsymbol{t}}
\def\bfu{\boldsymbol{u}}
\def\bfv{\boldsymbol{v}}
\def\bfw{\boldsymbol{w}}
\def\bfx{\boldsymbol{x}}
\def\bfy{\boldsymbol{y}}
\def\bfz{\boldsymbol{z}}
%
%
\def\bfA{\boldsymbol{A}}
\def\bfB{\boldsymbol{B}}
\def\bfC{\boldsymbol{C}}
\def\bfD{\boldsymbol{D}}
\def\bfE{\boldsymbol{E}}
\def\bfF{\boldsymbol{F}}
\def\bfG{\boldsymbol{G}}
\def\bfH{\boldsymbol{H}}
\def\bfI{\boldsymbol{I}}
\def\bfJ{\boldsymbol{J}}
\def\bfK{\boldsymbol{K}}
\def\bfL{\boldsymbol{L}}
\def\bfM{\boldsymbol{M}}
\def\bfN{\boldsymbol{N}}
\def\bfO{\boldsymbol{O}}
\def\bfP{\boldsymbol{P}}
\def\bfQ{\boldsymbol{Q}}
\def\bfR{\boldsymbol{R}}
\def\bfS{\boldsymbol{S}}
\def\bfT{\boldsymbol{T}}
\def\bfU{\boldsymbol{U}}
\def\bfV{\boldsymbol{V}}
\def\bfW{\boldsymbol{W}}
\def\bfX{\boldsymbol{X}}
\def\bfY{\boldsymbol{Y}}
\def\bfZ{\boldsymbol{Z}}
%
%
\def\balpha{\boldsymbol{\alpha}}
\def\bbeta{\boldsymbol{\beta}}
\def\bgamma{\boldsymbol{\gamma}}
\def\bdelta{\boldsymbol{\delta}}
\def\bepsilon{\boldsymbol{\epsilon}}
\def\bvepsilon{\boldsymbol{\varepsilon}}
\def\bzeta{\boldsymbol{\zeta}}
\def\b-eta{\boldsymbol{\eta}}
\def\btheta{\boldsymbol{\theta}}
\def\bvtheta{\boldsymbol{\vartheta}}
\def\biota{\boldsymbol{\iota}}
\def\bkappa{\boldsymbol{\kappa}}
\def\blambda{\boldsymbol{\lambda}}
\def\bmu{\boldsymbol{\mu}}
\def\bnu{\boldsymbol{\nu}}
\def\bxi{\boldsymbol{\xi}}
\def\bpi{\boldsymbol{\pi}}
\def\bvpi{\boldsymbol{\varpi}}
\def\brho{\boldsymbol{\rho}}
\def\bvrho{\boldsymbol{\varrho}}
\def\bsigma{\boldsymbol{\sigma}}
\def\bvsigma{\boldsymbol{\varsigma}}
\def\btau{\boldsymbol{\tau}}
\def\bupsilon{\boldsymbol{\upsilon}}
\def\bphi{\boldsymbol{\phi}}
\def\bvphi{\boldsymbol{\varphi}}
\def\bchi{\boldsymbol{\chi}}
\def\bpsi{\boldsymbol{\psi}}
\def\bomega{\boldsymbol{\omega}}
%
%
\def\bGamma{\boldsymbol{\Gamma}}
\def\bDelta{\boldsymbol{\Delta}}
\def\bTheta{\boldsymbol{\Theta}}
\def\bLambda{\boldsymbol{\Lambda}}
\def\bXi{\boldsymbol{\Xi}}
\def\bPi{\boldsymbol{\Pi}}
\def\bSigma{\boldsymbol{\Sigma}}
\def\bUpsilon{\boldsymbol{\Upsilon}}
\def\bPhi{\boldsymbol{\Phi}}
\def\bPsi{\boldsymbol{\Psi}}
\def\bOmega{\boldsymbol{\Omega}}
%
\def\EC{\mathrm{EC}}       
\def\LS{\mathrm{LS}}       
\def\RS{\mathrm{RS}}       
\def\SS{\mathrm{SS}}       
\def\baleph{\mbox{\boldmath $\aleph$}} 
\def\bscriptaleph{\mbox{\boldmath {\scriptsize {$\aleph$}}}} 
\def\bfootnotealeph{\mbox{\boldmath {\footnotesize {$\aleph$}}}} 
\def\squareforqed{\hbox{\rlap{$\sqcap$}$\sqcup$}}
\def\qed{\ifmmode\else\unskip\quad\fi\squareforqed}
\def\smartqed{\def\qed{\ifmmode\squareforqed\else{\unskip\nobreak\hfil
\penalty50\hskip1em\null\nobreak\hfil\squareforqed
\parfillskip=0pt\finalhyphendemerits=0\endgraf}\fi}}
\begin{frontmatter}



\title{Multivariate normality test based on the uniform distribution on the Stiefel manifold
}


\author[label1]{Koki Shimizu}
\author[label2]{Toshiya Iwashita}

\address[label1]{Tokyo University of Science, 1-3 Kagurazaka, Shinjuku-ku, Tokyo, 162-8601, Japan}
\address[label2]{Tokyo University of Science, 2641 Yamazaki, Noda, Chiba, 162-8601, Japan}
\cortext[mycorrespondingauthor]{Corresponding author. Email address: \url{k-shimizu@rs.tus.ac.jp}~(K. Shimizu).}

\begin{abstract}
This study presents a new procedure for necessary tests of multivariate normality based on the uniform distribution on the Stiefel manifold.
We demonstrate that the test statistic, which is formed by the product of the scaled residual matrix and the symmetric square root of a Wishart matrix, is exactly distributed as a matrix-variate normal distribution under the null hypothesis.
Monte Carlo simulations are conducted to assess the Type I error rate and power in non-asymptotic settings.

\end{abstract}



\begin{keyword}
Left-spherical distribution,
Necessary test,
Matrix variate distribution,
Scaled residuals



\end{keyword}

\end{frontmatter}


\section{Introduction}
Let $\bfX$ be a $p$-dimensional random vector and $\bfX_1, \dots, \bfX_N$ be $N$ independent copies of $\bfX$.
We are interested in testing multivariate normality (MVN):
\begin{align}
\label{test-normality}
H_0 : \text{the law of } \bfX \text{ is } N_p(\boldsymbol{\mu}, \Sigma),
\end{align}
where $N_p(\boldsymbol{\mu}, \Sigma)$ denotes the
$p$-dimensional normal distribution with mean vector $\boldsymbol{\mu}$
and positive definite covariance matrix $\Sigma$.
To test \eqref{test-normality}, various tests based on properties of the normal distribution have been proposed; some well-known tests are summarized in \cite{henze2002invariant} and \cite{thode2002testing}.
A natural statistic for MVN is the scaled residuals (or the studentized residuals)
$$
\bfY_j=S^{-1/2}(\bfX_j-\bf\bar{X}),
$$
where
\begin{align}
\label{eq:mean-cov}
\bar{\boldsymbol{X}} = \frac{1}{N}\sum_{j=1}^N \bfX_j,
\quad
S = \frac{1}{n}\sum_{j=1}^N (\bfX_j - \bar{\boldsymbol{X}})(\bfX_j - \bar{\boldsymbol{X}})^{\prime},
\quad
n = N - 1 \ge p.
\end{align}
Here, the prime $(\cdot)^{\prime}$ and
$S^{-1/2}$ denote the transpose and inverse of a symmetric square root of $S$, respectively.
Based on \eqref{eq:mean-cov}, \cite{mardia1970,mardia1974} introduced multivariate skewness and kurtosis statistics and derived their asymptotic distributions.
\cite{baringhaus1988} considered a consistent test based on the discrepancy between the characteristic function of the multivariate normal distribution and that of the empirical distribution.
The test has asymptotic power against sequences of contiguous alternatives at the rate $N^{-1/2}$, regardless of the dimension in \cite{henze1997new}.
However, the scaled residuals $\bfY_j$ are not independent; thus, these are approximate tests that rely on the asymptotic independence of $\bfY_j$.
As a further test for normality, \cite{enomoto2020multivariate} examined test statistic based on normalizing transformation of the multivariate kurtosis.
The developments of the Baringhaus--Henze type tests are reviewed in detail in \cite{ebner2020}.

On the other hand, the joint exact distribution of the scaled residuals has been studied in \cite{iwashita2014,iwashita2020sut}.
\cite{iwashita2014} demonstrated that the density of
$Y=[\bfY_1, \ldots, \bfY_N]$ under spherical population is exactly distributed as a matrix variate left-spherical distribution.
By applying this result, \cite{iwashita2020sut} showed that a scaled residual matrix is uniformly distributed over the Stiefel manifold.

In this study, we propose a testing procedure for MVN based on a characterization of normality involving the uniform distribution over the Stiefel manifold.
In Section \ref{sec:02}, we introduce the distribution of a scaled residual matrix, which was provided in \cite{iwashita2020sut}.
In Section \ref{sec:03}, we show that the proposed test statistic for multivariate normality is exactly distributed as a matrix-variate normal distribution.
In Section \ref{sec:04}, we conduct numerical experiments to assess the performance of the proposed testing procedure and apply it to real data.

\section{Uniform Distribution on the Stiefel Manifold and its properties}
\label{sec:02}
We introduce random matrices, which are formed from the scaled residuals that are uniformly distributed over the Stiefel manifold.
The Stiefel manifold $\mathcal{O}(n,p)$ is defined as
\[
{\mathcal{O}}(n,p)
= \{ X \in \mathbb{R}^{n\times p} \mid X^\top X = I_p \}.
\]
If $n=p$, {\it{i.e}}., $\mathcal{O}(p,p)\equiv\mathcal{O}(p)$ coincides with the orthogonal group, and if $n=1$, it reduces to the hypersphere.
We write $X \sim \mathrm{LS}_{n \times p}(\phi_X)$
if $X$ has a matrix variate left-spherical distribution, that is, $X$ is a left-orthogonally invariant
$n \times p$ random matrix with characteristic function $\phi_X$.
The uniform distribution on the Stiefel manifold, which is denoted by $\mathcal{U}_{n,p}$,
is the distribution of a random matrix $X$ satisfying
$X \in \mathcal{O}(n,p)$ and $X \sim \mathrm{LS}_{n \times p}(\phi_X)$.
The statistical properties of distributions $\mathcal{U}_{n,p}$ have been studied in \cite{fang1990} and \cite{chikuse2003statistics}.

Let $Y$ be an $N \times p$ random matrix defined by $Y' = [\bfY_1,\dots,\bfY_N] = Q X' S^{-1/2}$,
where $X = [\bfX_1,\dots,\bfX_N]$, $Q = I_N - N^{-1}\mathbf{1}_N \mathbf{1}_N'$, \(\bones_{N} = [1,\ldots, 1]' \in \mathbb{R}^{N}\) and $\bfY_j$ are defined in \eqref{eq:mean-cov}.
Using the orthogonal projection decomposition of the centering matrix $Q$, \cite{iwashita2020sut} demonstrated that the $n\times p$ scaled residuals matrix
\begin{align}
\label{scaled residuals matrix}
Z = K'X'S^{-1/2},
\end{align}
which has a left-spherical distribution $\mathrm{LS}_{n\times p}(\phi_Z)$,
where $K$ is an $N\times n$ matrix satisfying $Q = KK'$, $K'K = I_n$, and
$K'\mathbf{1}_N = \mathbf{0}\in \mathbb{R}^{n}$.
Moreover, the \( n \times p \) matrix
\begin{align}
\label{matrix-Stiefel manifold}
U = Z(Z'Z)^{-1/2} = n^{-1/2} Z
\end{align}
is exactly distributed the uniform distribution over $\mathcal{O}(n,p)$.
From Theorem~1.5.5 of \cite{chikuse2003statistics}, the following lemma provides a characterization of normality for the uniform distribution over $\mathcal{O}(n,p)$.
\begin{lemma}
\label{lemma-chikuse}
Let $Z_0$ be an $n \times p$ random matrix.
Its unique polar decomposition of $Z_0$ is given by
\begin{align*}
Z_0 = H_{Z_0} T_{Z_0}^{1/2}, \quad
H_{Z_0} = {Z_0} ({Z_0}'{Z_0})^{-1/2}, \quad
T_{Z_0} = {Z_0}'{Z_0} .
\end{align*}
Then, the random matrix ${Z_0}$ is distributed as matrix-variate normal distribution $N(O, I_n\otimes I_p)$
(see, \cite{chikuse2003statistics}, p.~23 for details)
if and only if the following conditions are satisfied:
\begin{enumerate}
  \item $H_{Z_0}$ and $T_{Z_0}$ are independent,
  \item $H_{Z_0}$ is the uniformly distributed on $\mathcal{O}(n,p)$,
  \item $T_{Z_0}$ has the Wishart distribution $W_p(n, I_p)$.
\end{enumerate}
\end{lemma}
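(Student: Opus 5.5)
The plan is to prove the two directions separately, working with the polar map $Z_0 \mapsto (H_{Z_0}, T_{Z_0})$. This map is a bijection from the set of full-rank $n\times p$ matrices onto $\mathcal{O}(n,p)\times\{T>0\}$, and full rank holds almost surely in every case we consider. Its inverse is $(H,T)\mapsto HT^{1/2}$.

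\textbf{Necessity.} Suppose $Z_0\sim N(O,I_n\otimes I_p)$. Then $T_{Z_0}=Z_0'Z_0\sim W_p(n,I_p)$ by the definition of the Wishart distribution, since $n\ge p$; this gives condition 3. For condition 2 and independence, use left-orthogonal invariance: for any fixed $\Gamma\in\mathcal{O}(n)$, $\Gamma Z_0$ has the same law as $Z_0$. Moreover,
\[
T_{\Gamma Z_0}=T_{Z_0},\qquad H_{\Gamma Z_0}=\Gamma H_{Z_0}.
\]
Hence $(\Gamma H_{Z_0},T_{Z_0})$ has the same joint law as $(H_{Z_0},T_{Z_0})$ for every $\Gamma$. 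So conditional on $T_{Z_0}=T$, the law of $H_{Z_0}$ is a left-$\mathcal{O}(n)$-invariant probability measure on $\mathcal{O}(n,p)$. Because $\mathcal{O}(n)$ acts transitively on $\mathcal{O}(n,p)$, the invariant probability measure is unique and equals $\mathcal{U}_{n,p}$. The conditional law therefore does not depend on $T$, which yields both independence (condition 1) and uniformity (condition 2).

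To make the conditioning step rigorous, I would average over Haar-distributed $\Gamma$ independent of $Z_0$. Since $\Gamma H$ is $\mathcal{U}_{n,p}$-distributed and independent of $H$ for each fixed $H$, one gets
\[
(H_{Z_0},T_{Z_0})\overset{d}{=}(\Gamma H_{Z_0},T_{Z_0})\sim \mathcal{U}_{n,p}\otimes \mathcal{L}(T_{Z_0}).
\]

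\textbf{Sufficiency.} Suppose conditions 1--3 hold. Then the joint law of $(H_{Z_0},T_{Z_0})$ is $\mathcal{U}_{n,p}\otimes W_p(n,I_p)$, and $Z_0=H_{Z_0}T_{Z_0}^{1/2}$ is a measurable function of this pair. The law of $Z_0$ is therefore uniquely determined by conditions 1--3. By the necessity part, a $N(O,I_n\otimes I_p)$ matrix $Z^\ast$ has $(H_{Z^\ast},T_{Z^\ast})$ with exactly this joint law. Consequently $Z_0\overset{d}{=}Z^\ast$, i.e.\ $Z_0\sim N(O,I_n\otimes I_p)$.

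\textbf{Main obstacle.} The delicate point is the uniqueness of the left-invariant probability measure on $\mathcal{O}(n,p)$, together with the independence argument built on it. I would handle this by the Haar-averaging argument above, citing the uniqueness of invariant measures on the homogeneous space $\mathcal{O}(n)/\mathcal{O}(n-p)$, as in Chikuse's Theorem~1.5.5. An alternative route for necessity is a Jacobian computation of $dZ=|T|^{(n-p-1)/2}\,dT\,(dH)$ up to constants. With it, the normal density $\etr(-Z'Z/2)$ factors into a Wishart density in $T$ times the constant density in $H$.
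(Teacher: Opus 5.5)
Your proof is correct, but it takes a different route from the one the paper relies on. The paper gives no proof of its own: it simply quotes Theorem~1.5.5 of Chikuse. The standard proof there reads the characterization off the polar-decomposition Jacobian $(dZ_0)=2^{-p}|T|^{(n-p-1)/2}(dT)(dH)$. Under that change of variables, the $N(O,I_n\otimes I_p)$ density $(2\pi)^{-np/2}\etr(-T/2)$ splits into the $W_p(n,I_p)$ density of $T$ times a constant in $H$ (the reciprocal volume of $\mathcal{O}(n,p)$). Because $(H,T)\mapsto HT^{1/2}$ is a diffeomorphism onto the full-rank matrices, both directions follow at once. This is the alternative you mention in your last sentence.

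Your main route instead uses only the symmetry $T_{\Gamma Z_0}=T_{Z_0}$, $H_{\Gamma Z_0}=\Gamma H_{Z_0}$, together with Haar averaging and the transitivity of $\mathcal{O}(n)$ on $\mathcal{O}(n,p)$. You then get the converse because the law of $Z_0=H_{Z_0}T_{Z_0}^{1/2}$ is a push-forward of the now fully specified product law of $(H_{Z_0},T_{Z_0})$.

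What your route buys:
\begin{itemize}
\item It needs no computation: no Jacobian, no normalizing constants such as $\Gamma_p(n/2)$ or the Stiefel volume, and no densities at all.
\item It shows the necessity half holds for every left-spherical $Z_0$ of full column rank, with normality entering only through $T_{Z_0}\sim W_p(n,I_p)$.
\item Your sufficiency argument gives Theorem~\ref{thm-dist-stat} in one line. For independent $U\sim\mathcal{U}_{n,p}$ and $A\sim W_p(n,I_p)$ one has $(H_{UA^{1/2}},T_{UA^{1/2}})=(U,A)$.
\end{itemize}

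What the Jacobian route buys in exchange: it produces explicit joint densities of $(H_{Z_0},T_{Z_0})$. It also continues to work for non-invariant laws, for example a nonzero mean, where your symmetry argument has nothing to say.
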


\section{Test statistic for multivariate normality and testing procedure}
\label{sec:03}
In this section, we derive the distribution of the test statistic based on the scaled residual matrix.
The following lemma shows that the distribution of the scaled residual matrix is invariant with respect to the mean vector and the covariance matrix.
\begin{lemma}
\label{lemma-IwashitaKlar}
Let \(\{\bfX_{j}\}_{j=1}^{N}\) be \(N\) independent random copies 
of \(\bfX \sim N_{p} (\bmu, \Sigma) \ (\Sigma >0)\) and \(X = [\bfX_{1},\ldots, \bfX_{N}] \sim N(\bmu \bones_{N}', 
\Sigma \otimes I_{N})\). 
Then 
\[
K'X'S^{-1/2} \stackrel{d}{=} K'{\tilde{X}}'\tilde{S}^{-1/2}
\]
where 
\(
	\tilde{X}=
	[\tilde{\bfX}_{1}, \ldots \tilde{\bfX}_{N}]=[\tilde{\bfX}_{(1)}, \ldots, \tilde{\bfX}_{(p)}]' 
	\sim N( O, I_{p} \otimes I_{N})
\), \(S\) and \(\tilde{S}\) are the sample covariance matrices 
based on \(\{\bfX_{j}\}_{j=1}^{N}\) and \(\{\tilde{\bf{X}}_{j}\}_{j=1}^{N}\),
respectively. 
The symbol ``$\overset{d}{=}$'' denotes equality in distribution.
\begin{proof}
Referring to the result in \cite{iwashita2014}, 
\(\Delta_{0} = \tilde{S}^{-1/2} \tilde{X}\) has a 
\textit{left-spherical distribution} \(\mathrm{\LS}_{ p\times N}(\phi_{\Delta_{0}})\).
Let \(\bfX^{*}_{j} = \bfX_{j} - \bmu \sim N_{p} (\bnull,
 \Sigma)\), 
 \(X^{*} = [\bfX^{*}_{1}, \ldots, \bfX^{*}_{N}] 
 \sim N(O, \Sigma \otimes I_{N} )\) and
 \[
 H_{\tilde{S}, \Sigma} = (\Sigma^{1/2} \tilde{S}\Sigma^{1/2})^{-1/2}
 \Sigma^{1/2} \tilde{S}^{1/2} \in \mathcal{O}(p).
 \]
Consider the characteristic function of 
\(\Delta = H_{\tilde{S},\Sigma} \Delta_{0}\), 
\(\phi_{\Delta}(T) \ (T \in \mathbb{R}^{p\times N})\). 
By making use of the normalized Haar measure, that is, 
\(\int_{\mathcal{O}(p)} (dH) = 1\), we have
\begin{align*}
	\phi_{\Delta} (T) &= \E 
	\left[
	\etr \left( i T'H_{\tilde{S}, \Sigma} \Delta_{0}\right)
	\right] \\
	&= \E
	\left[
	\int_{\mathcal{O}(p)} \etr \left(
	i T' H_{\tilde{S},\Sigma} H \Delta_{0}
	\right) (dH)
	\right] \\
	&= \E 
	\left[
	{}_{0} F_{1} \left(
	\dfrac{p}{2}; -\dfrac{1}{4} \Delta_{0}T'H_{\tilde{S},\Sigma}
	H_{\tilde{S},\Sigma}'T \Delta_{0}'
	\right)
	\right] \\
	&= \E
	\left[
	\int_{\mathcal{O}(p)} \etr 
	\left(
	i T' H \Delta_{0} 
 	\right)(dH)
	\right] \\
	&= \E
	\left[
	\etr \left(
	i T' \Delta_{0}
	\right)
	\right] \equiv \phi_{\Delta_{0}} (T),
\end{align*} 
which implies 
\(\Delta \stackrel{d}{=} \Delta_{0}\) and 
\(
\Delta = (\Sigma^{1/2} \tilde{S} \Sigma^{1/2})^{-1/2} \Sigma^{1/2} \tilde{X}
= S_{X^{*}}^{-1/2} X^{*}
\), where \(S_{X^{*}}\) is the sample covariance matrix based on 
\(X^{*}\), $\etr(\ast)=\exp(\tr(\ast))$ and ${}_0 F_1 (a_1; X)$ is the Bessel type hypergeometric function of matrix argument (see, for example, \citet[Definition 7.3.1]{Muirhead1982}). 
This then implies we have 
\[
K'X' = K'(X^{*}-\bmu \bones_{N}')' = K'(X^{*})', \quad
S = (1/n)XKK'X' = S_{X^{*}}, \]
consequently
\[
K'X' S^{-1/2} \stackrel{d}{=}K'(X^{*})' S_{X^{*}}^{-1/2} 
\stackrel{d}{=} K'\tilde{X}' \tilde{S}^{-1/2},
\]
\(K'\tilde{X}'\sim N(O, I_{n}\otimes I_{p})\) and
\(n\tilde{S} \sim W_{p} (n, I_{p})\) since 
\(K'\tilde{\bf{X}}_{(j)} \sim N_{n}(\bnull, I_{n})\).
\end{proof}
\end{lemma}
\label{chikuse}
From Lemmas~\ref{lemma-chikuse} and~\ref{lemma-IwashitaKlar}, we have Theorem~\ref{thm-dist-stat}, which is used as the null distribution for testing MVN.
\begin{theorem}
\label{thm-dist-stat}
Let $U$ be the \( n \times p \) matrix defined in \eqref{matrix-Stiefel manifold} and $A \sim W_p(n, I_p)$ be independent of $U$.
Then, we have
\[
UA^{1/2} \sim \mathit{N}(O, I_n\otimes I_p).
\]
\begin{proof}
Applying the matrix polar decomposition to \(K' \tilde{X}'\), we have
\begin{align}
\label{mat-decom}
K' \tilde{X}'
&= K' \tilde{X}' \bigl(\tilde{X} K K' \tilde{X}'\bigr)^{-1/2}
   \bigl(\tilde{X} K K' \tilde{X}'\bigr)^{1/2} \nonumber \\
&= \tilde{Z} (\tilde{Z}' \tilde{Z})^{-1/2} (\tilde{Z}' \tilde{Z})^{1/2},
\end{align}
where $\tilde{Z} = K' \tilde{X}' \tilde{S}^{-1/2}$, with $\tilde{X}$ and $\tilde{S}$ defined in Lemma~\ref{lemma-IwashitaKlar}.
The random matrices $\tilde{Z}(\tilde{Z}'\tilde{Z})^{-1/2}$ and $\tilde{Z}'\tilde{Z}=n\tilde{S}$ in \eqref{mat-decom}
are distributed as $\mathcal{U}_{n,p}$ and $W_p(n,I_p)$, respectively,
and are independent by Lemma~\ref{lemma-chikuse}.
Finally, note that $\tilde{Z} \stackrel{d}{=} Z$ by Lemma~\ref{lemma-IwashitaKlar}, where $Z$ is defined in \eqref{scaled residuals matrix}. 
The proof is complete.\end{proof}
\end{theorem}
Using Theorem~\ref{thm-dist-stat}, we propose a necessary test of MVN.
The null hypothesis \eqref{test-normality} would be obtained by just testing
\begin{align}
\label{test02}
H_0^{\ast} : UA^{1/2} \text{ has a matrix variate normal distribution }
N(O, I_n\otimes I_p),
\end{align}
against the alternative hypothesis $\neg H_0^{\ast}$.
\begin{remark}
As highlighted by \cite{liang2000testing,liang2019testing}, rejection of $H_0^{\ast}$ in \eqref{test-normality} implies rejection of $H_0$ in \eqref{test02}.
On the other hand, note that acceptance of $H_0^{\ast}$ does not imply that $H_0$ is true.
This is precisely why the test based on $H_0^{\ast}$ is called a ``necessary test" for $H_0$.
\end{remark}
We perform univariate normality tests on the $np$ elements of $UA^{1/2}$ in \eqref{test02} and reject $H_0$ if $H_0^{\ast}$ is rejected.
The proposed test based on Theorem~\ref{thm-dist-stat} involves randomness arising from the Wishart matrix transformation; consequently, the decision for a given sample may differ across realizations.
\cite{cuesta2009projection} proposed uniformity tests for directional data based on random projections and mitigated the effect of randomness using Bonferroni's correction.
To stabilize our test, we also incorporate Bonferroni's correction in the same manner as \cite{cuesta2009projection}.
The testing procedure is given as follows.
\begin{algorithm}[H]
\caption{Testing procedure based on random matrix transformations}
\label{algorithm}
\begin{algorithmic}[1]
\STATE Compute the matrix $U$ in \eqref{matrix-Stiefel manifold}.
\STATE Generate independent random matrices
      $A_i \sim W_p(n, I_p)$, $i=1,\dots,m$.
\STATE For each $i = 1, \ldots, m$, compute the transformed data $UA_i^{1/2}$.
\STATE Perform a univariate normality test on the $np$ entries of $UA_i^{1/2}$
       and obtain the corresponding $p$-value $P_i$.
\STATE Reject $H_0$ if $\min_{1 \le i \le m} P_i \le \alpha / m$.
\end{algorithmic}
\end{algorithm}

In Step~4 of Algorithm~\ref{algorithm}, the Anderson--Darling (AD) test and the Shapiro--Wilk (SW) test are applied to assess univariate normality for the entries of $U A_i^{1/2}$. We denote the corresponding methods by $\mathrm{AD}_m$ and $\mathrm{SW}_m$, respectively.

\section{Simulations}
\label{sec:04}
We examine the empirical Type I error rate and power of the proposed test.
We compare our tests $\mathrm{AD}_1$, $\mathrm{AD}_3$, $\mathrm{AD}_5$, $\mathrm{SW}_1$, $\mathrm{SW}_3$, and $\mathrm{SW}_5$ with the classical Mardia and Baringhaus--Henze (BH) tests implemented in Mathematica's built-in functions, where the Method option is set to ``Asymptotic."
Mardia's skewness and kurtosis tests are denoted by $b_M^{(1)}$ and $b_M^{(2)}$, respectively.
Table~1 provides the empirical Type~I error rates obtained from
$10^6$ Monte Carlo replications at the nominal significance level
$\alpha = 0.05$, for $p = 2$, sample sizes $N = 10, 20, 30$, and
$m = 1, 3, 5$.
Without loss of generality, we assume $\vec{\mu}=\vec{0}$ and $\Sigma=I_p$
in the simulation study, as implied by Lemma~2.
\begin{table}[H]
\centering
\small
\caption{Empirical Type I Error rate(in percent) against the multivariate normal distribution.}
\begin{tabular}{cc ccccccc cc}
\toprule
$p$ & $n$
& AD$_1$ & AD$_3$ & AD$_5$
& SW$_1$ & SW$_3$ & SW$_5$
& BH & $b^{(1)}_{M}$ & $b^{(2)}_{M}$ \\
\midrule

\multirow{3}{*}{2}
 & 10 & 4.83 & 4.53 & 4.52 & 5.01 & 4.74 & 4.64 & 2.26 & 1.11 & 0.00 \\
 & 20 & 4.93 & 4.86 & 4.87 & 5.00 & 4.94 & 4.88 & 3.75 & 3.60 & 0.25 \\
 & 30 & 4.98 & 4.90 & 4.98 & 4.97 & 4.92 & 4.91 & 4.23 & 4.35 & 0.86 \\
\midrule

\multirow{3}{*}{3}
 & 10 & 4.90 & 4.74 & 4.70 & 5.02 & 4.90 & 4.84 & 1.53 & 0.21 & 2.33 \\
 & 20 & 4.99 & 4.96 & 4.99 & 4.97 & 4.96 & 4.94 & 3.27 & 2.89 & 3.31 \\
 & 30 & 5.03 & 4.96 & 5.07 & 4.95 & 4.92 & 4.95 & 3.98 & 4.03 & 3.50 \\
\bottomrule
\end{tabular}
\end{table}
Our tests maintain Type I error rates close to the nominal significance level even in small-sample settings.
When $N=10$, the test tends to become conservative as $m$ increases; however, this effect is substantially reduced as $N$ increases, and the nominal significance level is well controlled.

To assess power, we consider four classes of alternatives: symmetric heavy-tailed, symmetric light-tailed, skewed, and bimodal distributions. 
Since the Mardia and BH tests fail to control the Type I error rate in finite samples, we focus only on our proposed test to examine its power.
We consider the following $p$-dimensional alternative distributions:
\begin{enumerate}
\renewcommand{\labelenumi}{(\arabic{enumi})}
\item[Model 1] : Multivariate $t$ distribution with $3$ degrees of freedom.
\item[Model 2] : Multivariate Pearson Type~II distribution with shape parameter $m=0$.
\item[Model 3] : Multivariate log-normal distribution.
\item[Model 4] : A mixture of two multivariate normal distributions with density
\begin{align*}
\frac{1}{2} N_p(\bm{0}, I_p) + \frac{1}{2}  N_p(\bm{0}, \Sigma),
\end{align*}
where $\Sigma = 8\bigl(\rho I_p + (1-\rho)\bm{1}_p \bm{1}_p^{\prime}\bigr)$ with $\rho = 1/2$.
\end{enumerate}
For each distribution, the characteristics of kurtosis and skewness are summarized in Table~\ref{table-power-dim2} of \cite{mecklin2005}.
Models 1 and 2 belong to the class of elliptical distributions, whereas Models 3 and 4 are non-elliptical. 
Tables~\ref{table-power-dim2} and~\ref{table-power-dim3} present the empirical power obtained from $10^6$ Monte Carlo simulations for the cases $p=2$ and $p=3$, respectively.
We can confirm that the power increases as the sample size $N$ increases.
Furthermore, in most cases, $SW_m$ exhibits higher power than $AD_m$.
\begin{table}[H]
\centering
\footnotesize
\caption{Empirical power of the tests (in percent) for $p=2$ and $\alpha=0.05$}
\begin{tabular}{ll cccccc}
\toprule
Alt. Dist. & $n$
& AD$_1$ & AD$_3$ & AD$_5$
& SW$_1$ & SW$_3$ & SW$_5$ \\
\midrule
\multirow[t]{3}{*}{$t_3$}
 & 10  & 10.4 & 8.91 & 8.18 & 11.3 & 10.7 & 10.3  \\
 & 20  & 27.4 & 24.2 & 22.8 & 30.9 & 30.1 & 29.7  \\
 & 30  & 44.6 & 39.9 & 37.9 & 49.7 & 47.9 & 46.9  \\
\midrule
\multirow[t]{3}{*}{Mixture}
 & 10  & 8.24 & 6.67 & 5.96 & 8.83 & 7.87 & 7.27  \\
 & 20  & 17.3 & 13.8 & 12.4 & 18.2 & 16.0 & 15.0  \\
 & 30  & 27.7 & 22.2 & 20.0 & 27.9 & 24.1 & 22.5  \\
\midrule
\multirow[t]{3}{*}{Lognormal}
 & 10  & 46.0 & 45.7 & 44.5 & 48.1 & 48.9 & 48.1  \\
 & 20  & 92.7 & 92.9 & 92.5 & 93.7 & 94.5 & 94.2  \\
 & 30  & 99.4 & 99.5 & 99.4 & 99.5 & 99.6 & 99.6  \\
\midrule
 \multirow[t]{3}{*}{Type {II}}
 & 10  & 5.01 & 3.33 & 2.67 & 4.57 & 2.80 & 2.18  \\
 & 20  & 9.56 & 5.88 & 4.54 & 8.08 & 4.03 & 2.68  \\
 & 30  & 17.4 & 11.0 & 8.52 & 16.6 & 8.88 & 5.97  \\
\bottomrule
\end{tabular}
\label{table-power-dim2}
\end{table}
\begin{table}[H]
\centering
\footnotesize
\caption{Empirical power of the tests (in percent) for $p=3$ and $\alpha=0.05$}
\begin{tabular}{ll cccccc}
\toprule
Alt. Dist. & $n$
& AD$_1$ & AD$_3$ & AD$_5$
& SW$_1$ & SW$_3$ & SW$_5$ \\
\midrule
\multirow[t]{3}{*}{$t_3$}
 & 10  & 8.44 & 8.04 & 7.82 & 8.99 & 9.29 & 9.31 \\
 & 20  & 26.2 & 25.1 & 24.5 & 29.3 & 31.0 & 31.4 \\
 & 30  & 47.7 & 45.4 & 44.3 & 52.9 & 53.8 & 54.0 \\
\midrule
\multirow[t]{3}{*}{Mixture}
 & 10  & 7.16 & 6.48 & 6.22 & 7.57 & 7.36 & 7.28 \\
 & 20  & 17.4 & 14.6 & 13.7 & 18.2 & 16.7 & 16.4 \\
 & 30  & 28.2 & 24.7 & 23.2 & 28.6 & 27.0 & 26.0 \\
\midrule
\multirow[t]{3}{*}{Lognormal}
 & 10  & 49.8 & 54.8 & 55.8 & 51.0 & 56.9 & 58.2 \\
 & 20  & 97.1 & 98.0 & 98.2 & 97.5 & 98.5 & 98.7 \\
 & 30  & 99.9 & 100.0 & 100.0 & 99.9 & 100.0 & 100.0 \\
\midrule
 \multirow[t]{3}{*}{Type {II}}
 & 10  & 4.69 & 3.73 & 2.68 & 4.41 & 3.32 & 2.17 \\
 & 20  & 7.72 & 5.49 & 4.61 & 6.29 & 3.68 & 2.74 \\
 & 30  & 13.6 & 9.45 & 7.93 & 12.3 & 7.31 & 5.43 \\
\bottomrule
\end{tabular}
\label{table-power-dim3}
\end{table}
Finally, we apply our test $SW_1$ to Fisher's \textit{Iris setosa} data set at the significance level $\alpha = 0.05$. 
For all 150 observations consisting of the three species (\textit{Iris setosa}, \textit{Iris versicolor}, and \textit{Iris virginica}), the test rejects the null hypothesis for the four measurements with a $p$-value of 0.004.
Across 500 repeated executions of the test, the proportion of rejections is 0.996. 
Rejecting multivariate normality for this data set agrees with findings reported in \cite{zhou2014powerful}.

\section*{Acknowledgments}
This work was supported by JSPS KAKENHI (Grant Number 25K17300).

\bibliographystyle{elsarticle-harv}
\bibliography{ref}




\end{document}